\title{Contractible complexes and non-positive immersions}
\author{William Fisher}
\address{Trinity College, Cambridge}
\email{will\_fisher@berkeley.edu}
\date{July 2023}
\subjclass[2020]{20F65}
\keywords{Non-positive immersions, Whitehead's conjecture}
\tikzset{
math to/.tip={Glyph[glyph math command=rightarrow]},
loop/.tip={Glyph[glyph math command=looparrowleft, swap]},
loop'/.tip={Glyph[glyph math command=looparrowleft]},
 weird/.tip={Glyph[glyph math command=Rrightarrow, glyph length=1.5ex]},
  pi/.tip={Glyph[glyph math command=pi, glyph length=1.5ex, glyph axis=0pt]},
}
\newcommand{\eqnum}{\refstepcounter{equation}\textup{\tagform@{\theequation}}}
\newsavebox{\@brx}
\newcommand{\llangle}[1][]{\savebox{\@brx}{\(\m@th{#1\langle}\)}%
  \mathopen{\copy\@brx\kern-0.5\wd\@brx\usebox{\@brx}}}
\newcommand{\rrangle}[1][]{\savebox{\@brx}{\(\m@th{#1\rangle}\)}%
  \mathclose{\copy\@brx\kern-0.5\wd\@brx\usebox{\@brx}}}
\DeclareMathOperator{\depth}{depth}
\newtheorem{theorem}{Theorem}[section]
\newtheorem{lemma}[theorem]{Lemma}
\newtheorem{proposition}[theorem]{Proposition}
\newtheorem{conjecture}[theorem]{Conjecture}
\theoremstyle{definition}
\newtheorem{definition}[theorem]{Definition}
\newtheorem{construction}[theorem]{Construction}
\theoremstyle{remark}
\newtheorem{remark}[theorem]{Remark}
\tikzset{->-/.style={decoration={
  markings,
  mark=at position .5 with {\arrow{>}}},postaction={decorate}}}
\tikzset{-*-/.style={decoration={
  markings,
  mark=at position #1 with {\arrow{>}}},postaction={decorate}}}
\begin{document}

\maketitle

\begin{abstract}
    We provide examples of contractible complexes which fail to have non-positive immersions and weak non-positive immersions, answering a conjecture of Wise in the negative.
    
    \medskip

	\noindent\textsc{R\'esum\'e.} Nous fournissons des exemples de complexes contractibles qui ne poss\`edent pas d'immersions non-positives et d'immersions faiblement non-positives, r\'epondant ainsi \`a une conjecture de Wise de mani\`ere n\'egative.
\end{abstract}

\section{Introduction}

In \cite{wise2022coherence}, Wise introduced the following property of 2-complexes.

\begin{definition}[Non-positive immersions]
    A map $Y\to X$ of CW-complexes is an \emph{immersion} if it is locally injective, written $Y\looparrowright X$.\par
    A 2-complex $X$ is said to have \emph{non-positive immersions} (NPI) if for every immersion $Y\looparrowright X$ of a compact, connected 2-complex $Y$ into $X$, we have either $\chi(Y)\le 0$ or $\pi_1 Y = 1$.
\end{definition}

One also may consider the stronger notion of \emph{contracting} NPI which says for every immersion $Y\looparrowright X$ of a compact, connected 2-complex $Y$ either $\chi(Y)\le 0$ or $Y$ is contractible. We will also say that $X$ has \emph{weak} NPI (WNPI) if $\chi(Y)\le 1$ for every compact, connected $Y\looparrowright X$. Despite the terminology, one should note that the use of ``weak'' here denotes only a weakening of the stronger contracting NPI property. By Hurewicz, one in fact has that contracting NPI is equivalent to having NPI and weak NPI.\par
The motivation for studying what classes of contractible complexes have NPI and WPNI comes from a famous conjecture of Whitehead \cite{whitehead1941adding}.

\begin{conjecture}[Whitehead]\label{conj:whiteheadconj}
    Every subcomplex of an aspherical 2-complex $X$ is aspherical.
\end{conjecture}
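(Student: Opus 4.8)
The plan is to follow Howie's reduction of Conjecture~\ref{conj:whiteheadconj} to two more tractable cases and then to attack the ``finite'' case using immersions. First I would suppose $K$ is a non-aspherical subcomplex of an aspherical $2$-complex $X$, so that $\pi_2 K \ne 0$. Replacing $K$ by the subcomplex spanned by finitely many cells carrying a nonzero class in $\pi_2 K$, and then passing to the cover corresponding to $\ker(\pi_1 K \to \pi_1 X)$ (which is still a subcomplex of a cover of $X$, hence of an aspherical complex), I would arrange $K$ to be connected with $\pi_1 K$ injecting into $\pi_1 X$ while keeping $\pi_2 K \ne 0$. A Magnus--Moldavanskii / tower argument on the cell structure of $X \setminus K$ then splits the problem into (a) the \emph{finite case}: $X = K' \cup e^2$ with $X$ aspherical and $K'$ not aspherical; and (b) the \emph{infinite case}: $X = \bigcup_n K_n$ a strictly ascending union of finite aspherical subcomplexes with $\pi_2 X = 0$ but $\pi_2 K_n \ne 0$ for some $n$.

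For the finite case the idea is to manufacture a contradiction from non-positive immersions. A nonzero class in $\pi_2 K'$ is carried by a reduced spherical diagram $D \to K'$, from which — by folding and discarding inessential pieces — one would want to extract a compact connected $2$-complex $Y$ with an immersion $Y \looparrowright K'$ and $\chi(Y) > 0$ whenever $D$ is chosen to be efficient. If aspherical, or even merely contractible, $2$-complexes had contracting non-positive immersions, this would force $Y$ to be contractible, and then a Papakyriakopoulos-style tower lift of $D$ up through $K' \hookrightarrow X$, maintaining reducedness at each stage of the tower, would exhibit the class as nonzero in $\pi_2 X = 0$, the desired contradiction. This is precisely the sort of consequence that Wise's NPI program was designed to deliver, which is why the status of NPI for contractible complexes — the topic of this paper — is the crux.

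The step I expect to obstruct this, and in fact to defeat it, is exactly the hypothesis just invoked: the present paper constructs contractible $2$-complexes \emph{without} non-positive (or even weak non-positive) immersions, so the naive ``efficient diagram $+$ NPI'' argument cannot be run in general; and separately the infinite case (b) admits no handle at all, since asphericity of $2$-complexes is not detectable cell by cell. Consequently I would not expect to prove Conjecture~\ref{conj:whiteheadconj} in full: the realistic target is to recover the known special cases — Howie's theorem when $X$ is staggered or obtained by a single relator-type attachment, and the locally indicable case — by feeding that extra combinatorial structure (which makes reducedness genuinely trackable through the tower) into the reduction above, rather than relying on a general immersion principle that, as this paper shows, is not available.
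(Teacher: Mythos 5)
There is no proof in the paper to compare against: the statement in question is Conjecture~\ref{conj:whiteheadconj}, Whitehead's asphericity conjecture from 1941, which the paper states purely as motivation and does not prove. Worse for your strategy, the paper's actual content is the \emph{removal} of the ingredient your sketch depends on: it constructs contractible $2$-complexes failing both NPI and WNPI, so the chain ``contractible $\Rightarrow$ (W)NPI $\Rightarrow$ Whitehead'' cannot be run. You acknowledge this in your final paragraph, which is to your credit, but it means what you have written is an account of why the problem is hard, not a proof, and it cannot be accepted as one.

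Even read as a conditional argument, two steps are genuine gaps. First, the passage from a reduced spherical diagram $D \to K'$ to a compact connected immersed $Y \looparrowright K'$ with $\chi(Y) > 0$ is not automatic: folding a sphere (which has $\chi = 2$) can perfectly well produce a complex of nonpositive Euler characteristic, and making ``efficient'' precise so that positivity is guaranteed is essentially equivalent to the (W)NPI property you are trying to invoke, so the argument is circular unless the target complex is already known to have WNPI. Second, the infinite case (b) is waved away with no argument; Howie's reduction does isolate it, but the ascending-union half is itself an open problem, not a formality. If your realistic target is the known special cases (staggered complexes, locally indicable fundamental group), you should prove those directly from the extra structure rather than presenting them as instances of a general immersion principle that this paper shows to be false.
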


In addressing this conjecture, it thus makes sense to look for properties of complexes inherited by subcomplexes which imply asphericity. In particular, the property of having WNPI implies the complex $X$ is aspherical \cite[Theorem 3.4]{wise2022coherence} and is inherited by subcomplexes. Passing to universal covers, one is then led to a conjecture of Wise \cite{wise2022coherence}.

\begin{conjecture}[Wise]\label{conj:wiseconj}
    Every contractible 2-complex $X$ has (weak) non-positive immersions.
\end{conjecture}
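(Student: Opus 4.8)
Since the displayed statement is a conjecture that this paper sets out to refute, the ``proof'' I plan is really a construction of counterexamples. Concretely, the goal is to produce a contractible $2$-complex $X$ admitting an immersion $Y\looparrowright X$ of a compact connected $2$-complex with $\chi(Y)\ge 2$ (which breaks WNPI, hence also contracting NPI) and with $\pi_1 Y\ne 1$, so that the same $Y$, having $\chi(Y)>0$ and nontrivial fundamental group, also breaks plain NPI.

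A natural first reduction is to build $X$ by completing a smaller complex. Since the inclusion of a subcomplex is always an immersion, if $Z$ is any $2$-complex that fails NPI and $Z$ sits as a subcomplex of a contractible $2$-complex $X$, then $X$ fails NPI too, via $Y\looparrowright Z\hookrightarrow X$. Examining the long exact sequence of the pair $(X,Z)$ and using that $X/Z$ is again a ($2$-dimensional) CW-complex --- so $H_2(X/Z)$ is free abelian --- one gets $\tilde H_2(X/Z)\cong H_1(Z)$ and $\tilde H_3(X/Z)\cong H_2(Z)$, hence the constraints that $H_1(Z)$ be torsion-free and $H_2(Z)=0$. Conversely, given a $Z$ with $H_1$ free and $H_2=0$ one can hope to attach $1$- and $2$-cells in equal numbers so as to kill $\pi_1(Z)$ and land inside a contractible complex, provided $\pi_1(Z)$ is ``efficiently'' killable (normally generated, stably after adjoining free factors, by $\rk H_1(Z)$ elements). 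So the plan is to locate $Z$ with all of: $H_1(Z)$ free, $H_2(Z)=0$, $\pi_1(Z)$ efficiently normally generable, and $Z$ failing NPI; then set $X$ to be the completion.

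To verify that a candidate $Z$ (or directly a candidate $X$) fails NPI one exhibits the offending $Y$ and checks its Euler characteristic via a combinatorial Gauss--Bonnet identity: for any immersion $Y\looparrowright W$, $\chi(Y)=\sum_{v\in Y^{(0)}}\kappa(v)$ with curvature $\kappa(v)=1-\tfrac12\,e(v)+\sum_{\gamma}\tfrac{1}{|\partial\sigma_\gamma|}$, where $e(v)$ is the number of edge-ends at $v$, $\gamma$ ranges over corners of $2$-cells at $v$, and $\sigma_\gamma$ is the $2$-cell of $\gamma$. The quantity $\kappa(v)$ depends only on how the link $\operatorname{lk}_Y(v)$ immerses into $\operatorname{lk}_W(f(v))$, and positive curvature is concentrated at vertices whose link maps onto (or over-fills) the target link. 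So the real content is an engineering problem: design the links of $W=X$ rich enough that a graph immersing into them can be assembled into a $Y$ with $\chi(Y)\ge 2$ and $\pi_1 Y\ne 1$, while keeping the relators of $X$ constrained so that $H_1(X)=0$, $H_2(X)=0$ and $\pi_1(X)=1$ (the last three being a routine homology check once a balanced presentation of the trivial group is written down).

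The hard part is exactly the tension between these two demands. Forcing $H_1(Z)$ torsion-free and $H_2(Z)=0$ pushes the presentation underlying $Z$ toward being ``efficient/aspherical-like'', and all the standard efficient families --- one-relator complexes, $C'(1/6)$ and more general small-cancellation complexes, staggered complexes, nonpositively curved complexes --- are already known to have NPI. Hence $X$ must have more than one $2$-cell and must avoid every one of these classes, which is a delicate place to sit: geometrically complicated enough to carry a positive-curvature immersion, yet homologically trivial enough to be contractible. The bulk of the work is therefore (i) pinning down such a complex explicitly, likely among two-relator aspherical or nearly-aspherical complexes engineered to have a vertex link admitting an immersion from a positively-curved graph, and (ii) making the Gauss--Bonnet bookkeeping for the specific $Y$ completely explicit so that $\chi(Y)\ge 2$ and $\pi_1 Y\ne 1$ can be read off. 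Once $X$ and $Y$ are in hand, the contractibility of $X$ and the properties of $Y$ are checked directly, refuting both halves of the conjecture.
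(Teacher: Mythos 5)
There is a genuine gap: your proposal is a search strategy rather than a proof, and it stops exactly where the mathematical content of the paper begins. The conjecture is refuted only by exhibiting a specific contractible $X$ together with a specific immersion $Y\looparrowright X$ with $\chi(Y)\ge 2$, and you defer this (``the bulk of the work is\dots pinning down such a complex explicitly'') without any mechanism that guarantees such a pair exists or can be found. Worse, the route you propose --- first find a $2$-complex $Z$ failing NPI with $H_1(Z)$ free and $H_2(Z)=0$, then ``complete'' it to a contractible $X$ by attaching equal numbers of $1$- and $2$-cells --- runs head-on into the efficiency/relation-gap problem: having the right homology does not let you kill $\pi_1(Z)$ with $\rk H_1(Z)$ relators, and there is no general procedure for deciding when this is possible. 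Your parenthetical ``provided $\pi_1(Z)$ is efficiently killable'' is not a routine hypothesis to discharge; it is an open-ended obstruction. The paper avoids this entirely by reversing the direction of attack: it starts from a family of $2$-complexes already known to be contractible (the Miller--Schupp balanced presentations $\langle a,b\mid w,\ ba^nb^{-1}a^{-(n+1)}\rangle$ of the trivial group) and then searches \emph{inside} a fixed contractible $X$ for a positive-Euler-characteristic immersion. The search is made effective by a folding algorithm for $2$-complexes (Louder--Wilton), organized into a forest of facial pieces along which the Euler characteristic is non-decreasing; the output is the explicit $Y\looparrowright X$ for $\langle a,b\mid ab^2ab^{-1},\ bab^{-1}a^{-2}\rangle$ with $\chi(Y)=2$, verified by checking injectivity on vertex links.

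Two smaller points. First, your claim that the same $Y$ also refutes plain NPI requires $\pi_1Y\ne 1$, which is not automatic for a $\chi=2$ complex (it could a priori be simply connected with $H_2\ne 0$); the paper instead passes from a WNPI failure for $X$ to an NPI failure for the homotopy-equivalent $X'=X\vee D^2$ via $Y\vee S^1\looparrowright X'$, which has $\chi\ge 1$ and visibly nontrivial $\pi_1$. Second, your combinatorial Gauss--Bonnet identity is correct and would serve to compute $\chi(Y)$, but it is a verification tool, not a construction tool: nothing in your proposal produces the positively curved $Y$ whose existence is the whole point.
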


An affirmative answer to the WNPI variant of Conjecture \ref{conj:wiseconj} would imply Conjecture \ref{conj:whiteheadconj}. Indeed, assuming all contractible complexes to have WNPI, then given an aspherical 2-complex $X$ with sub-complex $Y$, we can consider the contractible universal cover $\hat X\to X$ of $X$. $Y$ is then covered by a sub-complex $\hat Y\subseteq \hat X$. Since $\hat X$ has WNPI, it follows that so does $\hat Y\looparrowright \hat X$. Thus $\hat Y$, and hence $Y$, is aspherical.\par
Due to work of Howie, one can also deduce Conjecture \ref{conj:whiteheadconj} from the NPI variant of Conjecture \ref{conj:wiseconj}. In \cite[Question 2]{howie1979aspherical}, Howie asks whether the fundamental group of a subcomplex of a contractible 2-complex has no non-trivial finitely generated perfect subgroups and shows an affirmative answer to this question implies Conjecture \ref{conj:whiteheadconj}. However, by Wise \cite[Theorem 3.3]{wise2022coherence}, if $X$ has NPI, then $\pi_1 X$ is locally indicable. Since NPI is inherited by subcomplexes, Conjecture \ref{conj:wiseconj} in the NPI form implies a positive answer to \cite[Question 2]{howie1979aspherical}, implying Conjecture \ref{conj:whiteheadconj}.\par
The purpose of this note is to provide examples of contractible 2-complexes which fail to have WNPI and NPI, answering both forms of Conjecture \ref{conj:wiseconj} in the negative.\par
After this work was completed, the author learned that Chemtov and Wise have also constructed a family of counterexamples to Conjectures \ref{conj:wiseconj} using different techniques \cite{wisechemtov}.

\subsection*{Acknowledgements}
This research was supported by the Trinity College Summer Studentship Scheme. The author is also grateful to Henry Wilton for many helpful comments and discussions.

\section{Examples}\label{sec:examples}
The contractible complexes searched come from a family introduced by Miller--Schupp \cite{miller1999some}. They showed that balanced presentations of the form
\begin{equation}\label{eq:prestype}
    \langle a,b\, |\, w,\, ba^nb^{-1}a^{-(n + 1)}\rangle
\end{equation}
where $n\ge 1$ and $w$ is a word with exponent sum $1$ in $b$ represent the trivial group, and hence have contractible presentation complexes by theorems of Hurewicz and Whitehead. Throughout the remainder of this paper, all morphisms of complexes will be assumed to be \emph{combinatorial}, i.e.\ sending $n$-cells homeomorphically to $n$-cells.\par
To begin, we remark the following lemma.
\begin{lemma}
    Suppose $X$ is a 2-complex failing to have WNPI. Then there exists a 2-complex $X'$ homotopy equivalent to $X$ which has neither WNPI nor NPI.
\end{lemma}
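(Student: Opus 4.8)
The plan is to unpack both definitions and then make a small homotopy‑preserving modification of $X$ that ``imports'' extra free fundamental group into an immersed subcomplex. Since $X$ fails to have WNPI, there is an immersion $f\colon Y\looparrowright X$ of a compact connected $2$‑complex with $\chi(Y)\ge 2$. It then suffices to build a $2$‑complex $X'\simeq X$ together with an immersion $Y'\looparrowright X'$ of a compact connected $2$‑complex satisfying $\chi(Y')\ge 1$ and $\pi_1 Y'\ne 1$: such a $Y'$ exhibits the failure of NPI for $X'$ (and, since homotopy equivalence preserves contractibility, if $X$ is contractible then so is $X'$).

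First I would construct $X'$. Fix a vertex $v_0$ of $Y$ and put $v=f(v_0)$. Let $X'=X\cup e\cup D$, where $e$ is a new $1$‑cell attached to $X$ as a loop at $v$ and $D$ is a new $2$‑cell attached along $e$ by a homeomorphism $\partial D\to\bar e$. Then $\bar e\cup\bar D$ is a $2$‑disc meeting $X$ only in the point $v$, so $X'\cong X\vee D^2$ and hence $X'\simeq X$. Next I would construct $Y'$: set $Y'=Y\cup e'$, where $e'$ is a new $1$‑cell attached to $Y$ as a loop at $v_0$, and extend $f$ to a cellular map $g\colon Y'\to X'$ by sending $e'$ homeomorphically onto $e$. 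Since we have added one $1$‑cell and no $0$‑cells, $\chi(Y')=\chi(Y)-1\ge 1$; and $Y'\cong Y\vee S^1$, so $\pi_1 Y'\cong\pi_1 Y\ast\Z\ne 1$. The complex $Y'$ is evidently compact and connected.

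It then remains to check that $g$ is an immersion. Away from $v_0$ this is immediate: $g$ agrees with $f$ on $Y$, and on the interior of $e'$ it restricts to a homeomorphism onto $\mathring e$, whose only $g$‑preimage in $Y'$ is $\mathring{e'}$. At $v_0$ one compares links: $\mathrm{lk}(v_0,Y')$ is $\mathrm{lk}(v_0,Y)$ together with the two germs of $e'$ at $v_0$, while $\mathrm{lk}(v,X')$ is $\mathrm{lk}(v,X)$ together with an arc contributed by $D$ whose two endpoints are the two germs of $e$ at $v$. The restricted map $\mathrm{lk}(v_0,Y)\to\mathrm{lk}(v,X)$ is injective because $f$ is an immersion; the two germs of $e'$ map injectively to the two germs of $e$; and these two images are disjoint, so $g$ is injective on $\mathrm{lk}(v_0,Y')$. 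Hence $Y'\looparrowright X'$, and $X'$ fails to have NPI.

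I expect the only point requiring real care to be this link computation at $v_0$: attaching $e$ and $e'$ as loops, rather than as arcs to new vertices, is exactly what keeps the new link data disjoint from the old while forcing $\chi$ to drop by one. Beyond that I anticipate no genuine obstacle — the content of the lemma is simply that NPI, unlike WNPI, is sensitive to the intrinsic fundamental group of an immersed complex, and hence to wedging on a (null‑homotopic) circle.
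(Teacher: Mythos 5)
Your construction is exactly the paper's: you form $X'\cong X\vee D^2$ and immerse $Y'\cong Y\vee S^1$ by sending the new circle to the boundary of the new disc, so that $\chi(Y')=\chi(Y)-1\ge 1$ while $\pi_1 Y'\ne 1$. The argument is correct; you simply spell out the link-injectivity check at the wedge point that the paper leaves implicit.
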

\begin{proof}
    Suppose $Y\looparrowright X$ with $\chi(Y) \ge 2$. Then consider the natural immersion $Y\vee S^1\looparrowright X\vee D^2$ given by sending $Y$ via $Y\looparrowright X$ and $S^1$ to the boundary of $D^2$. Then letting $X' = X\vee D^2$ we have $X' \simeq X$ and $\chi(Y\vee S^1) \ge 1$, yet $\pi_1(Y\vee S^1)\neq 1$.
\end{proof}
\noindent Thus failure of the WNPI variant of Conjecture \ref{conj:wiseconj} implies failure of the NPI variant as well. As such, the goal of this section will be to present contractible 2-complexes failing to have WNPI.\par
The case of $n = 1$ and $w = ab^2ab^{-1}$ as in \eqref{eq:prestype} fails to have WNPI as shown by Figure \ref{fig:weak_npi_example}. Figure \ref{fig:weak_npi_example} gives a combinatorial description of the immersion by showing the commuting diagram of 1-skeleta and attaching maps of 2-cells induced by the immersion $Y\looparrowright X$. Magenta is used to denote the images of cells under the various maps, and Figure \ref{fig:linkdiagrams} shows the links of all appearing vertices. Note that $Y\looparrowright X$ since the induced maps on links are injective and in this example $\chi(Y)=2$. Section \ref{sec:algodescription} gives a brief description of how this example, and others in this section, were generated.\par
An exhaustive analysis of presentation complexes of the form \eqref{eq:prestype} with $n = 1$ and $|w| \le 6$ was also conducted. To reduce the search space, one can show that for a general 2-complex having NPI and WNPI depends only on the isomorphism class of the group pair (c.f.\ \cite{wilton2022rational}), which, up to identifying all free groups of a given rank, corresponds to the $\mathrm{Aut}(\pi_1 X^1)$-orbit of the set of conjugacy classes of $\langle w_1\rangle,\dots,\langle w_n\rangle$ where $w_1,\dots,w_n$ are the attaching words of the 2-cells of $X$. As such, only one representative $w$ of each such orbit needs to be tested. Those $w$ that failed to have WNPI are given by
\begin{equation}\label{eq:failingwords}
	w\in\{ab^2ab^{-1}, a^{-1}b^2a^{-1}b^{-1}, a^2b^{-1}ab^2, ab^{-1}a^{-2}b^2, a^2b^2a^{-1}b^{-1}\}.
\end{equation}
Table \ref{tab:fail_npi_examples} summarizes these findings and groups all $|w| \le 6$ into their respective group pair isomorphism classes.

We remark that the case of $w = b$, which accounts for all presentations of the form \eqref{eq:prestype} with $n = 1$ and $|w|\le 4$ up to group pair isomorphism, is still open. Testing however shows that there exist immersions $Y\looparrowright X$ with $\chi(Y) = 1$ and $Y$ having no free faces, making the example resistant to current approaches for proving NPI and WNPI.

\begin{table}
	\centering
    \begin{tabular}{|c||c|c|}\hline
    	Representative $w$ & $w$ with $|w|\le 6$ and Isomorphic Group Pair & Known to Fail WNPI \\\hline
        $b$ & $b$, $ab$, $a^-1b$, $a^2b$, $a^{-2}b$, $a^3b$, $a^{-3}b$, $a^4b$, $a^{-4}b$, $a^5b$, $a^{-5}b$ & No \\
        $ab^2ab^{-1}$ & $ab^2ab^{-1}$, $(ab)^2ab^{-1}$, $aba^{-1}bab^{-1}$ & Yes \\
        $ab^2a^{-1}b^{-1}$ & $ab^2a^{-1}b^{-1}$, $(ab)^2a^{-1}b^{-1}$, $a(ba^{-1})^2b^{-1}$ & No \\
        $ab^{-1}a^{-1}b^2$ & $ab^{-1}a^{-1}b^2$, $abab^{-1}a^{-1}b$ & No \\
        $a^{-1}b^2a^{-1}b^{-1}$ & $a^{-1}b^2a^{-1}b^{-1}$, $aba^{-1}b^{-1}a^{-1}b$, $(a^{-1}b)^2a^{-1}b^{-1}$ & Yes \\
        $a^2b^2ab^{-1}$ & $a^2b^2ab^{-1}$ & No \\
        $a^2b^2a^{-1}b^{-1}$ & $a^2b^2a^{-1}b^{-1}$ & Yes \\
        $a^2b^{-1}ab^2$ & $a^2b^{-1}ab^2$ & Yes \\
        $a^2b^{-1}a^{-1}b^2$ & $a^2b^{-1}a^{-1}b^2$ & No \\
        $ab^2a^{-2}b^{-1}$ & $ab^2a^{-2}b^{-1}$ & No \\
        $ab^{-1}a^{-2}b^2$ & $ab^{-1}a^{-2}b^2$ & Yes \\
        $ab^{-1}(a^{-1}b)^2$ & $ab^{-1}(a^{-1}b)^2$ & No \\
        $a^{-2}b^2a^{-1}b^{-1}$ & $a^{-2}b^2a^{-1}b^{-1}$ & No \\
        $a^{-2}b^{-1}a^{-1}b^2$ & $a^{-2}b^{-1}a^{-1}b^2$ & No \\\hline
    \end{tabular}
    \caption{Group pair isomorphism classes of presentations of the form \eqref{eq:prestype} with $n = 1$ and $|w|\le 6$ and whether they are known to fail to have WNPI.}
    \label{tab:fail_npi_examples}
\end{table}

\begin{figure}
    \centering
    \scalebox{0.9}{
        \begin{tikzpicture}[node distance={25mm}, thick, main/.style = {draw, circle}] 
\node (Y_skeleton) at (0,0) {
    \begin{tikzpicture}[->,node distance={25mm and 25mm}, thick, main/.style = {draw, circle}] 
\node[main] (1) {$v_1$}; 
\node[main] (2) [right of=1] {$v_2$}; 
\node[main] (3) [right of=2] {$v_3$}; 
\node[main] (4) [below of=2] {$v_4$}; 
\path[every node/.style={font=\sffamily}]
    (2) edge[bend right=20] node [pos=0.2, above] {$e_4$} (1)
    (1) edge[bend right=20] node [pos=0.3, below] {$e_8$} (2)
    (2) edge[bend right=20] node [pos=0.3, below] {$e_7$} (3)
    (2) edge[bend left=20] node [pos=0.3, above] {$e_3$} (3)
    (1) edge node [pos=0.3, below left] {$e_1$} (4)
    (3) edge node [pos=0.3, below right] {$e_2$} (4)
    (3) edge[bend right=50] node [pos=0.3, above] {$e_5$} (1);

\draw (4) to [out=310,in=230,looseness=6] node[pos=0.3, below right=.1mm] {$e_6$} (4);

\path[-*-=0.75, magenta, bend right=20] (2) to node[pos=0.55, above=.5mm] {$b^{-1}$} (1);

\path[-*-=0.75, magenta, bend right=20] (1) to node[pos=0.65, below=.5mm] {$a$} (2);

\path[-*-=0.75, magenta, bend right=20] (2) to node[pos=0.65, below=.5mm] {$a$} (3);

\path[-*-=0.75, magenta, bend left=20] (2) to node[pos=0.65, above=.5mm] {$b$} (3);

\path[-*-=0.75, magenta] (1) to node[pos=0.7, below left=.25mm] {$a^{-1}$} (4);

\path[-*-=0.75, magenta] (3) to node[pos=0.7, below right=.25mm] {$a$} (4);

\path[-*-=0.75, magenta, bend right=50] (3) to node[pos=0.7, above=.25mm] {$b$} (1);

\path[-*-=0.75, magenta] (4) to [out=310,in=230,looseness=6] node[pos=0.7, below left=.1mm] {$b^{-1}$} (4);

\end{tikzpicture}
};
\node (X_skeleton) at (8,0) {
    \input{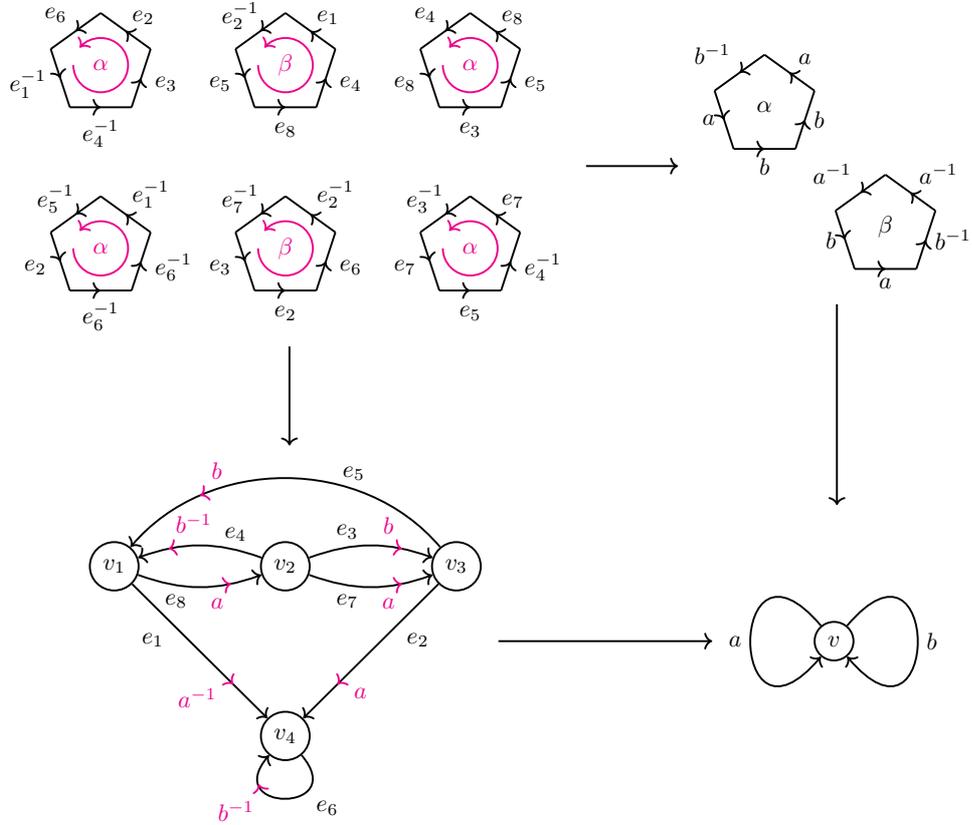}
};
\node (Y_faces) at (0, 7) {
    \begin{tikzpicture}[node distance={27mm and 23mm}, thick]
    \node (f1) [
        magenta,
        draw=none,
        minimum size=1.5cm,
        regular polygon, regular polygon sides=5,
    ]{$\alpha$};

    \draw[->-] (f1.corner 1) -- (f1.corner 2) node[midway, above left=-.0] {$e_6$};
    \draw[->-] (f1.corner 2) -- (f1.corner 3) node[pos=.6, left=.1] {$e_1^{-1}$};
    \draw[->-] (f1.corner 3) -- (f1.corner 4) node[midway, below=.05] {$e_4^{-1}$};
    \draw[->-] (f1.corner 4) -- (f1.corner 5) node[pos=0.4, right=.1] {$e_3$};
    \draw[->-] (f1.corner 5) -- (f1.corner 1) node[midway, above right=-.05] {$e_2$};
    
    \node (f2) [
        draw=none,
        minimum size=1.5cm,
        regular polygon, regular polygon sides=5,
        right of=f1,
        magenta,
    ]{$\beta$};
    
    \draw[->-] (f2.corner 1) -- (f2.corner 2) node[midway, above left=-.15] {$e_2^{-1}$};
    \draw[->-] (f2.corner 2) -- (f2.corner 3) node[pos=.6, left=.1] {$e_5$};
    \draw[->-] (f2.corner 3) -- (f2.corner 4) node[midway, below=.1] {$e_8$};
    \draw[->-] (f2.corner 4) -- (f2.corner 5) node[pos=0.4, right=.1] {$e_4$};
    \draw[->-] (f2.corner 5) -- (f2.corner 1) node[midway, above right=-.05] {$e_1$};
    
    \node (f3) [
        draw=none,
        minimum size=1.5cm,
        regular polygon, regular polygon sides=5,
        right of=f2,
        magenta
    ]{$\alpha$};
    
    \draw[->-] (f3.corner 1) -- (f3.corner 2) node[midway, above left=0] {$e_4$};
    \draw[->-] (f3.corner 2) -- (f3.corner 3) node[pos=.6, left=.1] {$e_8$};
    \draw[->-] (f3.corner 3) -- (f3.corner 4) node[midway, below=.1] {$e_3$};
    \draw[->-] (f3.corner 4) -- (f3.corner 5) node[pos=0.4, right=.1] {$e_5$};
    \draw[->-] (f3.corner 5) -- (f3.corner 1) node[midway, above right=-.05] {$e_8$};
    
    \node (f4) [
        draw=none,
        minimum size=1.5cm,
        regular polygon, regular polygon sides=5,
        below of=f1,
        magenta
    ] {$\alpha$};
    
    \draw[->-] (f4.corner 1) -- (f4.corner 2) node[midway, above left=-.15] {$e_5^{-1}$};
    \draw[->-] (f4.corner 2) -- (f4.corner 3) node[pos=.6, left=.1] {$e_2$};
    \draw[->-] (f4.corner 3) -- (f4.corner 4) node[midway, below=.05] {$e_6^{-1}$};
    \draw[->-] (f4.corner 4) -- (f4.corner 5) node[pos=0.4, right=.1] {$e_6^{-1}$};
    \draw[->-] (f4.corner 5) -- (f4.corner 1) node[midway, above right=-.05] {$e_1^{-1}$};
    
    \node (f5) [
        draw=none,
        minimum size=1.5cm,
        regular polygon, regular polygon sides=5,
        right of=f4,
        magenta
    ] {$\beta$};
    
    \draw[->-] (f5.corner 1) -- (f5.corner 2) node[midway, above left=-.15] {$e_7^{-1}$};
    \draw[->-] (f5.corner 2) -- (f5.corner 3) node[pos=.6, left=.1] {$e_3$};
    \draw[->-] (f5.corner 3) -- (f5.corner 4) node[midway, below=.1] {$e_2$};
    \draw[->-] (f5.corner 4) -- (f5.corner 5) node[pos=0.4, right=.1] {$e_6$};
    \draw[->-] (f5.corner 5) -- (f5.corner 1) node[midway, above right=-.05] {$e_2^{-1}$};

    
    \node (f6) [
        draw=none,
        minimum size=1.5cm,
        regular polygon, regular polygon sides=5,
        right of=f5,
        magenta
    ] {$\alpha$};
    
    \draw[->-] (f6.corner 1) -- (f6.corner 2) node[midway, above left=-.15] {$e_3^{-1}$};
    \draw[->-] (f6.corner 2) -- (f6.corner 3) node[pos=.6, left=.1] {$e_7$};
    \draw[->-] (f6.corner 3) -- (f6.corner 4) node[midway, below=.1] {$e_5$};
    \draw[->-] (f6.corner 4) -- (f6.corner 5) node[pos=0.4, right=.1] {$e_4^{-1}$};
    \draw[->-] (f6.corner 5) -- (f6.corner 1) node[midway, above right=-.05] {$e_7$};
    
    \draw[magenta, ->] ([shift={(180:4mm)}]f1) arc (180:500:4mm);
    \draw[magenta, ->] ([shift={(180:4mm)}]f2) arc (180:500:4mm);
    \draw[magenta, ->] ([shift={(180:4mm)}]f3) arc (180:500:4mm);
    \draw[magenta, ->] ([shift={(180:4mm)}]f4) arc (180:500:4mm);
    \draw[magenta, ->] ([shift={(180:4mm)}]f5) arc (180:500:4mm);
    \draw[magenta, ->] ([shift={(180:4mm)}]f6) arc (180:500:4mm);
\end{tikzpicture}
};

\node (X_faces) at (8,7) {
    \begin{tikzpicture}[node distance=25mm and 25mm, thick]
    \node (f1) [
        draw=none,
        minimum size=1.5cm,
        regular polygon, regular polygon sides=5,
    ]{$\alpha$};

    \draw[->-] (f1.corner 1) -- (f1.corner 2) node[midway, above left] {$b^{-1}$};
    \draw[->-] (f1.corner 2) -- (f1.corner 3) node[midway, left] {$a$};
    \draw[->-] (f1.corner 3) -- (f1.corner 4) node[midway, below] {$b$};
    \draw[->-] (f1.corner 4) -- (f1.corner 5) node[midway, right] {$b$};
    \draw[->-] (f1.corner 5) -- (f1.corner 1) node[midway, above right] {$a$};
    
    \node (f2) [
        draw=none,
        minimum size=1.5cm,
        regular polygon, regular polygon sides=5,
        below right of=f1,
    ]{$\beta$};
    
    \draw[->-] (f2.corner 1) -- (f2.corner 2) node[midway, above left] {$a^{-1}$};
    \draw[->-] (f2.corner 2) -- (f2.corner 3) node[midway, left] {$b$};
    \draw[->-] (f2.corner 3) -- (f2.corner 4) node[midway, below] {$a$};
    \draw[->-] (f2.corner 4) -- (f2.corner 5) node[midway, right] {$b^{-1}$};
    \draw[->-] (f2.corner 5) -- (f2.corner 1) node[midway, above right] {$a^{-1}$};
\end{tikzpicture}
};

\draw[->] (Y_skeleton) to (X_skeleton);
\draw[->] (Y_faces) to (Y_skeleton);
\draw[->] (X_faces) to (X_skeleton);
\draw[->] (Y_faces) to (X_faces);

\end{tikzpicture}
    }
    \caption{Failure of WNPI for the presentation complex of $\langle a,b\,|\,ab^2ab^{-1},\, bab^{-1}a^{-2}\rangle$.}
    \label{fig:weak_npi_example}
\end{figure}

\begin{figure}
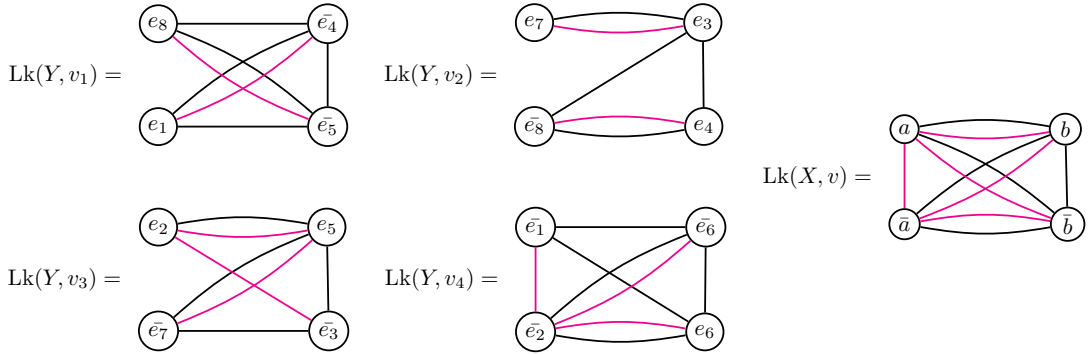

    \centering
    \scalebox{0.85}{
        \begin{tikzpicture}[node distance={7mm and 22mm}, thick] 
\node[label=left:{$\mathrm{Lk}(Y,v_1) = $}] (v1) {
    \input{links/linkv1}
};

\node[label=left:{$\mathrm{Lk}(Y,v_2) = $}] (v2) [right=of v1] {
    \input{links/linkv2}
};

\node[label=left:{$\mathrm{Lk}(Y,v_3) = $}] (v3) [below=of v1] {
    \input{links/linkv3}
};

\node[label=left:{$\mathrm{Lk}(Y,v_4) = $}] (v4) [right=of v3] {
    \input{links/linkv4}
};

\node[label=left:{$\mathrm{Lk}(X,v) = $}] (v) [xshift=115mm] at ($(v1)!0.5!(v3)$) {
    \input{links/linkv}
};

\end{tikzpicture}
    }
    \caption{Links of the vertices of complexes in the immersion $Y\looparrowright X$ given by Figure \ref{fig:weak_npi_example}.}
    \label{fig:linkdiagrams}
\end{figure}

\section{Generating immersions}\label{sec:algodescription}
The goal of this section is to give a brief summary of how the examples of Section \ref{sec:examples} were generated.\par
To start, an important tool for algorithmically generating immersions into a complex is the notion of folding. For graphs, this notion was popularized by Stallings' \cite{stallings1983topology} and takes a morphism $f: G\to H$ and factors it as
\begin{equation*}
    G \to \bar G\looparrowright H
\end{equation*}
where $G\to\bar G$ is surjective and $\pi_1$-surjective, and $\bar G\looparrowright H$ is an immersion. Moreover, this factorization is the most ``general'' possible factorization through an immersion.\par
This can be generalized to 2-complexes. Recall that a morphism of complexes in this setting is assumed to be combinatorial. Given this, the appropriate generalization of folding of graphs is given by \cite[Lemma 4.1]{louder2018one}.

\begin{lemma}\label{lemma:folding2complexes}
    Let $\phi : A\to B$ be a morphism of 2-complexes with $A$ finite. Then $\phi$ factors as
    \begin{equation*}
        A\to  \bar A\looparrowright B
    \end{equation*}
    where $A\to \bar A$ is surjective and $\pi_1$-surjective. Furthermore, if $A\to B$ factors through an immersion $\psi : Z\looparrowright X$, then $\bar A\looparrowright B$ factors uniquely through $\psi$.
\end{lemma}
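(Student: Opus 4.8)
The plan is to generalise Stallings' graph-folding procedure to combinatorial $2$-complexes, following the approach of \cite{louder2018one}, factoring $\phi$ as a finite composite of \emph{elementary folds} followed by an immersion. An \emph{edge fold} takes two distinct $1$-cells of the current complex that have the same image edge in $B$ and share an oriented end — both outgoing, or both incoming, at a common vertex — and identifies them, together with the identification of endpoints this forces; the attaching maps of $2$-cells are pushed forward along the quotient. A \emph{$2$-cell fold} takes two distinct $2$-cells whose characteristic maps agree, both as maps to $B$ and as maps to the current complex, up to a cellular reparametrisation of the boundary circle, and identifies them. In either case the cells being glued have equal image in $B$, so the structure map to $B$ descends to the quotient; each fold is a quotient map, hence surjective; and each fold is $\pi_1$-surjective (for an edge fold this is the usual Stallings computation, while a $2$-cell fold changes neither the $1$-skeleton nor the normal closure of the set of attaching words, so is a $\pi_1$-isomorphism). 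Since $A$ is finite and each elementary fold strictly decreases the total number of cells, only finitely many can be performed, and the procedure halts at a complex $\bar A$ admitting no fold, equipped with a surjective, $\pi_1$-surjective map $A\to\bar A$ through which $\phi$ factors as $A\to\bar A\to B$.

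The crux is to show that $\bar A\to B$ is an immersion once no fold applies. Local injectivity is automatic in the interiors of $2$-cells, since characteristic maps are cellular homeomorphisms; at vertices and in the interiors of edges it amounts to injectivity of the induced maps of links. The vertices of $\mathrm{Lk}(v,\bar A)$ are the oriented edge-ends at $v$, and the absence of edge folds says precisely that no two of them share an image; so the link map is injective on vertices, and an analogous argument, again invoking this and the no-$2$-cell-fold hypothesis, handles interior points of edges. The edges of $\mathrm{Lk}(v,\bar A)$ are the corners of $2$-cells at $v$; within a single $2$-cell these have distinct images, as its characteristic map is a homeomorphism, so suppose a corner of a $2$-cell $f_1$ and a corner of a distinct $2$-cell $f_2$, both at $v$, map to the same corner of a $2$-cell $g$ of $B$. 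Because $\bar A^{(1)}$ immerses in $B^{(1)}$, the closed edge-path $\partial g$ lifts to $\bar A^{(1)}$ at most once after prescribing the image of a single vertex of its (fixed, cellular) domain circle. Transporting the attaching maps of $f_1$ and $f_2$ to that circle via the homeomorphisms $f_i\to g$ produces two such lifts, and the shared corner forces both to carry the same domain vertex to $v$; hence the two lifts coincide, which says exactly that $f_1$ and $f_2$ have the same characteristic map up to cellular reparametrisation — so a $2$-cell fold was available after all, a contradiction. Thus every link map is injective and $\bar A\looparrowright B$. This unique-lifting step, together with the reparametrisation bookkeeping it forces on the definition of a $2$-cell fold (the reason that move must be phrased via characteristic maps rather than boundary words), is the part I expect to demand the most care.

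For the universal property, suppose $A\xrightarrow{q}Z\xrightarrow{\psi}B$ with $\psi$ an immersion and $\phi=\psi q$, and write the fold sequence as $A=A_0\to A_1\to\cdots\to A_m=\bar A$. I claim inductively that $q$ extends to compatible maps $q_i\colon A_i\to Z$ with $\psi q_i$ equal to the structure map $A_i\to B$; the case $i=0$ is $q$ itself. Given $q_i$, the fold $A_i\to A_{i+1}$ identifies two cells of $A_i$ having equal image in $B$ and sharing the relevant incidence data (a common oriented end for edges, a common attaching map up to reparametrisation for $2$-cells); their images under $q_i$ are then cells of $Z$ with equal image in $B$ sharing that data, and since $\psi$ is an immersion two such cells of $Z$ must coincide — two distinct edges of $Z$ sharing an oriented end and an image, or two distinct $2$-cells of $Z$ with the same image and the same attaching map, would each violate injectivity of some link map. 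Hence $q_i$ descends to $q_{i+1}\colon A_{i+1}\to Z$ with $\psi q_{i+1}$ the structure map $A_{i+1}\to B$, and at the end we obtain a map $\bar A\to Z$ compatible with $q$ and with $\psi$; since $A\to\bar A$ is surjective it is the only such map.
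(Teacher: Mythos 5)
The paper offers no proof of this lemma---it is quoted verbatim from Louder--Wilton \cite[Lemma 4.1]{louder2018one}---so the only comparison available is with that reference, and your argument (elementary edge folds and $2$-cell folds, termination by cell count, link-injectivity via unique path lifting in the folded $1$-skeleton, and the universal property by descending $q$ through the fold sequence) is essentially the standard proof given there. Your proposal is correct; the only point deserving slightly more care is the bookkeeping that an edge fold identifies two \emph{oriented} edge-ends whose images agree as oriented edge-ends of $B$ (so that, e.g., a fold may identify $e_1$ with $\bar e_2$), but your link-based formulation already encodes this.
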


In the notation above, $\bar A\looparrowright B$ will be referred to as the folding of $\phi$. Moreover, this factorization can be efficiently algorithmically computed. This allows one to algorithmically produce immersions into a complex $X$ from combinatorial maps $Y\to X$. The next ingredient then is to produce maps $Y\to X$ whose folding produces complexes with large Euler characteristic. Given that we can always remove \emph{isolated edges}, i.e.\ edges not adjacent to any 2-cell of $Y$, to increase the Euler characteristic, we propose the following construction.

\begin{construction}\label{constr:mainconstruction} We construct a forest $\mathcal{G}(X)$ of immersions $Y\looparrowright X$ with $Y$ finite, connected and having no isolated edges inductively as follows:
\begin{enumerate}[label=(\roman*)]
    \item For every 2-cell $F$ of $X$, consider the map $D\to X$ sending a disc $D$ onto $F\subseteq X$. The root nodes of $\mathcal{G}(X)$ are given by the foldings of all such maps.
    \item Given a node $Y\looparrowright X \in \mathcal{G}(X)$, its children are constructed as follows: For each 2-cell $F$ of $X$ and orientation, consider the map
    \begin{equation}\label{eq:addingfacemap}
        Y\amalg D\to X.
    \end{equation}
    where $D$ is a disc mapped onto $F$. Now, consider all possible factorizations of \eqref{eq:addingfacemap} through a wedge
    \begin{equation*}
        Y\vee D \to X
    \end{equation*}
    at some vertex of $Y$ and $D$. For each such map, consider the folding $C\looparrowright X$. If $C$ still has one more 2-cell than $Y$, then $C\looparrowright X$ becomes a child of $Y\looparrowright X$.
\end{enumerate}
\end{construction}

\begin{remark}
    In practice, in Step (ii) of Construction \ref{constr:mainconstruction} one should check all children for high Euler characteristic before discarding those which lost 2-cells in the folding process. While examples may still be found inside $\mathcal{G}(X)$ itself, examples which have additional identified 2-cells tend to be smaller. Figure \ref{fig:weak_npi_example} was found in this manner and Figure \ref{fig:step-by-step} shows the step-by-step construction per Construction \ref{constr:mainconstruction} leading up to it.
\end{remark}

Traversing this forest gives a sequence of immersions $Y_i\looparrowright X$ with $\chi(Y_i)$ non-decreasing, as the next proposition shows. For $Y\looparrowright X\in \mathcal{G}(X)$ we define $\depth(Y\looparrowright X)$ to be the length of the shortest path between $Y\looparrowright X$ and a root node of $\mathcal{G}(X)$.

\begin{proposition}
    For every $Y\looparrowright X\in \mathcal{G}(X)$, we have that
    \begin{equation}\label{eq:depthequalsfaces}
        \depth(Y\looparrowright X) = \#\{\textnormal{2-cells of }Y\} - 1
    \end{equation}
    and if $Y\looparrowright X\in\mathcal{G}(X)$ is a descendent of $Z\looparrowright X$ with $\depth(Z\looparrowright X) \le \depth(Y\looparrowright X)$, then $\chi(Z) \le \chi(Y)$.
\end{proposition}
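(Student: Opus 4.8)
The plan is to treat the two assertions separately: first establish the depth formula \eqref{eq:depthequalsfaces} by induction, and then use it, together with a one-step monotonicity lemma for $\chi$, to extract the Euler characteristic inequality. For \eqref{eq:depthequalsfaces} I would induct on $\depth(Y\looparrowright X)$. A root node arises by folding a single characteristic map $D_X(F)$, and since folding never identifies the unique $2$-cell with anything, a root has exactly one $2$-cell, matching $\depth=0$. For the inductive step, step 2 of the construction retains a child $C\looparrowright X$ \emph{precisely} when $C$ has one more $2$-cell than its parent; hence every edge of $\mathcal{G}(X)$ increases the $2$-cell count by exactly $1$. As $\mathcal{G}(X)$ is a forest, the shortest path from a node to a root is its unique ancestral path, and counting $2$-cells along it yields $\depth(Y\looparrowright X)=\#\{\text{2-cells of }Y\}-1$; the well-definedness noted in the preceding Remark guarantees independence from the enumeration choices.

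The engine of the second assertion is a one-step monotonicity lemma: if $C\looparrowright X$ is a child of $B\looparrowright X$, then $\chi(C)\ge\chi(B)$. To prove it I would recall that $C$ is the folded representative of a wedge $B\vee D_X(F)$. Wedging a disc at a single vertex leaves the Euler characteristic unchanged, $\chi(B\vee D_X(F))=\chi(B)+\chi(D_X(F))-1=\chi(B)$, so it suffices to check that folding does not decrease $\chi$. Because the retained child $C$ has one more $2$-cell than $B$, the extra $2$-cell of $D_X(F)$ survives, which is exactly the condition that no $2$-cell is identified with another during the fold; thus every elementary fold takes place in the $1$-skeleton. Each such fold identifies two edges sharing a vertex, deleting one edge and merging at most one pair of vertices, so it changes $\chi$ by $0$ (distinct far endpoints) or by $+1$ (a collapsed bigon). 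Composing these gives $\chi(C)\ge\chi(B)$, which is precisely the mechanism behind the surrounding assertion that $\chi$ is non-decreasing as one descends $\mathcal{G}(X)$.

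Finally I would deduce the stated inequality, using the depth hypothesis decisively. If $Z\looparrowright X$ is a descendant of $Y\looparrowright X$, there is a descending path $Y=Y_0\to\cdots\to Y_k=Z$ in $\mathcal{G}(X)$, and by \eqref{eq:depthequalsfaces} each step increases depth by exactly $1$, so $\depth(Z)=\depth(Y)+k$. The hypothesis $\depth(Z)\le\depth(Y)$ forces $k=0$, i.e.\ $Z=Y$, whence $\chi(Z)=\chi(Y)$ and in particular $\chi(Z)\le\chi(Y)$. The one-step lemma is what makes this dichotomy sharp: along any genuine descent ($k>0$) the depth strictly increases while $\chi$ can only grow, so the constraint $\depth(Z)\le\depth(Y)$ is exactly the obstruction pinning $Z$ back to $Y$ and is the hypothesis doing the work.

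The hard part will be the folding computation for $2$-complexes. I expect the main obstacle is verifying that producing a \emph{retained} child performs only $1$-skeletal elementary folds: one must argue that the survival of the new $2$-cell is equivalent to no $2$-cell identification occurring, and that Lemma \ref{lemma:folding2complexes} genuinely factors the fold as a composite of such elementary moves, so that the per-step bookkeeping on $V$ and $E$ is legitimate. Once that is in place, the depth formula and the final deduction are routine.
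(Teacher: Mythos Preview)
Your proof of \eqref{eq:depthequalsfaces} is exactly the paper's. For the one-step monotonicity you take a slightly different route: you track elementary graph-folds and argue that, because the new $2$-cell survives, no $2$-cells are identified and each elementary fold changes $\chi$ by $0$ or $+1$. The paper instead observes that the $1$-skeleton of the child is a Stallings fold of the $1$-skeleton of $(\text{parent})\vee D$, hence $\rk\pi_1$ of the $1$-skeleton can only drop, and combines this with $\chi = 1 - \rk\pi_1(\,\cdot\,)_{(1)} + \#\{\text{2-cells}\}$ and the retained extra face. That argument is shorter and completely sidesteps your worry about whether the fold decomposes into purely $1$-skeletal elementary moves: one never needs to know \emph{how} the fold happens, only that it is $\pi_1$-surjective on $1$-skeleta.

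Your final paragraph is a sharp observation: as literally stated, ``$Z$ a descendant of $Y$'' together with $\depth(Z)\le\depth(Y)$ forces $Z=Y$, so the conclusion is vacuous. The paper's own proof in fact reduces to the case ``$Y$ is a child of $Z$'' and shows $\chi(Y)\ge\chi(Z)$, i.e.\ it proves the intended non-trivial claim that $\chi$ is non-decreasing along descent in $\mathcal{G}(X)$; the statement as printed has the roles of ancestor/descendant (equivalently, the direction of one inequality) swapped. Your one-step lemma already delivers this intended content, so substantively you have what the paper has.
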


\begin{proof}
    \eqref{eq:depthequalsfaces} is clear from construction of $\mathcal{G}(X)$ by induction since only maps where no 2-cells are folded together are kept as children.\par
    The second claim also follows by induction. Indeed, let $Y\looparrowright X$ be a child of $Z\looparrowright X$. Since only maps where no 2-cells are folded together are kept as children, the second Betti number does not decrease and as folding is $\pi_1$-surjective, the first Betti number does not increase. Thus we have that
    \begin{equation*}
    	\begin{aligned}
    		\chi(Y) &= 1 - b_1(Y) + b_2(Y) \\
    		&\ge 1 - b_1(Z) + b_2(Z) = \chi(Z).
    	\end{aligned}
    	\qedhere
    \end{equation*}
\end{proof}

Folding as in Lemma \ref{lemma:folding2complexes} and traversal of $\mathcal{G}(X)$ as above were implemented\footnote{https://github.com/willfisher/npi-and-folding} in Python and used to find the examples of contractible complexes that fail variants of NPI. Serialized versions of all examples referenced in this paper can be found at this repository.

\begin{figure}
    \centering
    \input{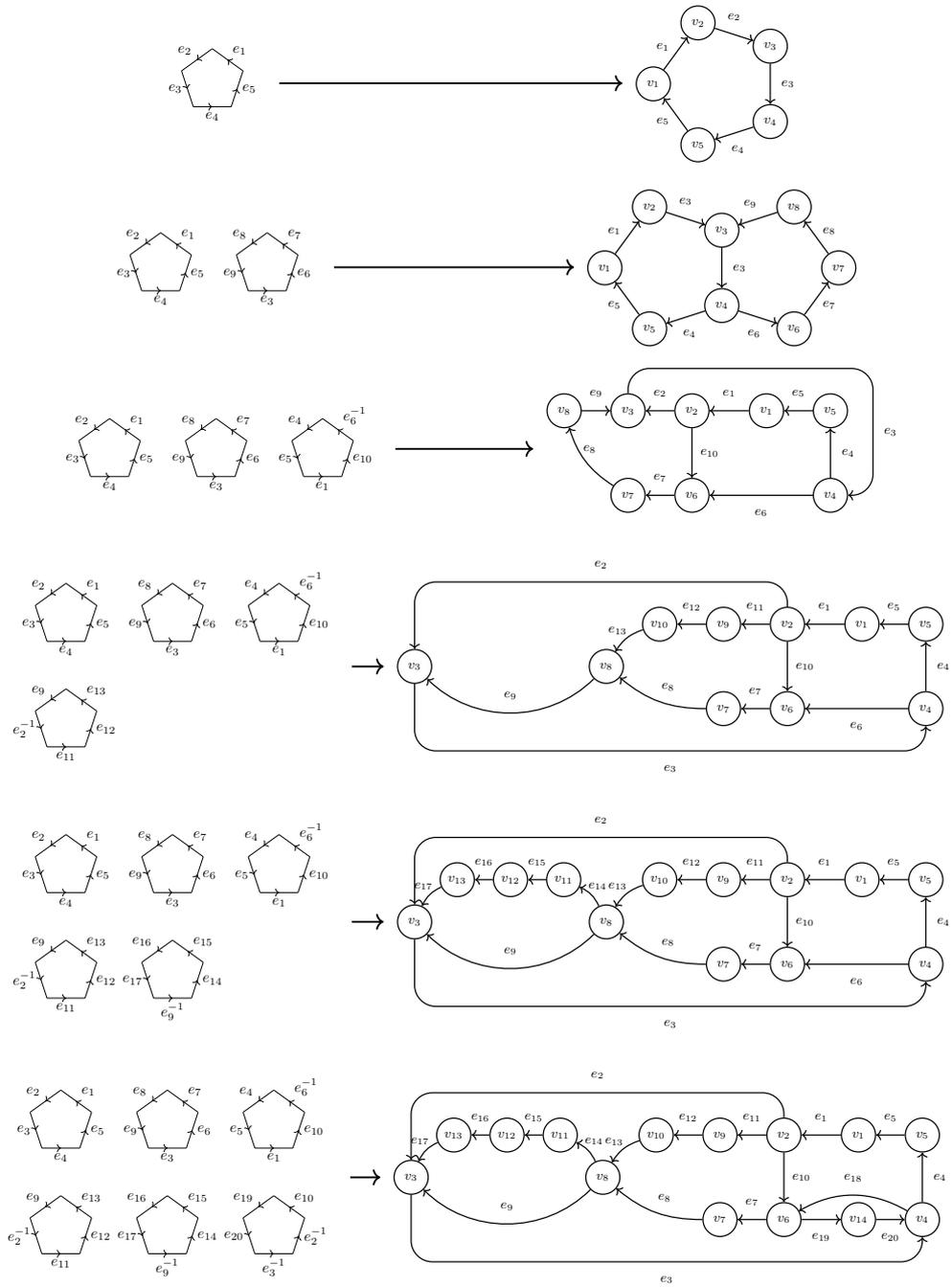}
    \caption{Step-by-step construction of the immersion in Figure \ref{fig:weak_npi_example} per algorithm described in Construction \ref{constr:mainconstruction}.}
    \label{fig:step-by-step}
\end{figure}

\bibliographystyle{plain}
\bibliography{references.bib}

\end{document}